\newcommand{\N}{\mathbb N}
\providecommand{\norm}[1]{\left\lVert#1\right\rVert}
\providecommand{\ip}[2]{\left\langle #1, #2 \right\rangle}
\newcommand{\F}{\operatorname{F}}
\newcommand{\Fhat}{\hat{\mathrm F}}
\numberwithin{equation}{section}
\theoremstyle{plain}
\newtheorem{theorem}{Theorem}[section]
\newtheorem{lemma}[theorem]{Lemma}
\newtheorem{corollary}[theorem]{Corollary}
\theoremstyle{remark}
\newtheorem{remark}[theorem]{Remark}
\title[Parallel methods]{%
Parallel methods for quasinonexpansive mappings in a Hilbert space}
\author{Koji~Aoyama}
\address[K.~Aoyama]
{Aoyama Mathematical Laboratory, Chiba University, Japan}
\email{aoyama@bm.skr.jp}
\author{Shigeru~Iemoto}
\address[S.~Iemoto]{%
Faculty of Science and Engineering, Chuo University, Japan}
\email{iemoto@tamacc.chuo-u.ac.jp}
\keywords{Parallel method, quasinonexpansive mapping, 
fixed point, approximation algorithm}
\subjclass[2010]{47J25, 47J20, 47H09}
\begin{document}

\begin{abstract}
This paper is devoted to the problem of finding a common fixed point
of quasinonexpansive mappings defined on a Hilbert space. 
To approximate the solution to this problem, we present several 
iterative processes using the parallel method 
based on \cites{MR3196179,JMM2018}. 
\end{abstract}

\maketitle

\section{Introduction}
The objective of this study is to solve the following common fixed point
problem: 
\begin{center}
 Find $z \in \bigcap_{i=1}^N \F(T_i)$, 
\end{center}
where $N$ is a positive integer,
$T_i$ is a quasinonexpansive mapping of a subset $C$ of a Hilbert space
$H$, and $\F(T_i)$ is the set of fixed points of $T_i$ for $i \in
\{1,\dotsc, N\}$. 
In particular, we intend to prove that if $u$ is a given point in $H$,
then the sequence $\{ x_n \}$ generated by $u$ and $T_i$ converges
strongly to the common fixed point of $\{T_i\}$ closest to $u$. 

In the previous study \cite{JMM2018}, 
we established some strong convergence results regarding the problem 
by the parallel hybrid method and the parallel shrinking method;
see \cite{MR3196179} and the references therein for the parallel
hybrid method. 
These results were proven using convergence theorems obtained in 
\cite{MR2884574} and some properties \cite{JMM2018}*{Lemmas 2.2 and
2.3} of the parallel algorithm.

In the present study, 
we obtain other convergence results for the common fixed point
problem
by using the parallel method based on \cites{MR3196179,JMM2018} 
and some simple iteration processes
developed in~\cites{MR2529497,MR2960628}. 

The paper is organized as follows: Section 2 introduces some relevant
notions, definitions, and lemmas. 
In Section 3, we prove a 
strong convergence theorem for the common fixed
point problem of quasinonexpansive nonself-mappings. 
In Section 4, we also provide strong and weak convergence theorems 
for the common fixed point problem of strongly quasinonexpansive
mappings. 

\section{Preliminaries}

Throughout the present paper, $H$ denotes a real Hilbert space, 
$\ip{\,\cdot\,}{\,\cdot\,}$ the inner product of $H$, 
$\norm{\,\cdot\,}$ the norm of $H$, $C$ a nonempty subset of $H$, 
$I$ the identity mapping on $H$, and $\N$ the set of positive
integers. 
Strong convergence of a sequence $\{x_n\}$ in $H$ to $z\in H$ is
denoted by $x_n \to z$ and weak convergence by 
$x_n \rightharpoonup z$. 

Let $T\colon C\to H$ be a mapping. 
The set of fixed points of $T$ is denoted by $\F(T)$, that is, 
$\F(T) = \{z \in C\colon Tz=z\}$. 
A point $z \in C$ is said to be an \emph{asymptotic fixed point} of
$T$~\cites{MR2058234} if there exists a sequence $\{x_n\}$ in
$C$ such that $x_n - T x_n\to 0$ and $x_n \rightharpoonup z$. 
The set of asymptotic fixed points of $T$ is denoted by $\Fhat(T)$. 
It is clear that $\F(T) \subset \Fhat(T)$. 
A mapping $T$ is said to be \emph{quasinonexpansive} if 
$\F(T) \ne \emptyset$ and $\norm{Tx - z} \leq \norm{x-z}$ for all
$x\in C$ and $z\in \F(T)$; 
$T$ is said to be \emph{nonexpansive} if 
$\norm{Tx - Ty} \leq \norm{x-y}$ for all $x,y\in C$; 
$T$ is said to be \emph{firmly nonexpansive}~\cite{MR0215141} if 
$\ip{x-y}{Tx - Ty} \geq \norm{Tx - Ty}^2$, or
equivalently
\begin{equation}\label{e:firmly}
\norm{Tx - Ty}^2 \leq \norm{x-y}^2 - \norm{x-y-(Tx-Ty)}^2
\end{equation}
for all $x, y \in C$; 
$T$ is said to be \emph{strongly quasinonexpansive}~\cites{%
pNACA2017,pNACA2015,MR4136436,MR3908455,MR3213161}
if $T$ is quasinonexpansive and $Tx_n - x_n \to 0$ whenever
$\{x_n\}$ is a bounded sequence in $C$ and 
$\norm{x_n - p} - \norm{Tx_n - p} \to 0$ for some point $p\in \F(T)$;
$T$ is said to be \emph{demiclosed at $0$} if $z \in C$ and $Tz = 0$ 
whenever $\{ x_n \}$ is a sequence in $C$ such that
$x_n\rightharpoonup z$ and
$T x_n \to 0$; see, for example, \cite{MR1074005}. 
It is clear that $I-T$ is demiclosed at $0$ if and only if 
$\Fhat(T) = \F(T)$. 
Moreover, under the assumption that $C$ is closed and convex, 
we know the following:
\begin{itemize}
 \item If $T$ is quasinonexpansive, then $\F(T)$
       is closed and convex; see~\cite{MR0298499}*{Theorem~1};
 \item if $T$ is nonexpansive, then $\Fhat(T) = \F(T)$; 
       see~\cite{MR1074005}. 
\end{itemize}

\begin{remark}
 The notion of an asymptotic fixed point above originates from
 that proposed by Reich~\cite{MR1386686}. 
 However, the two notions are slightly different. 
\end{remark}

\begin{remark}
 The notion of a strongly quasinonexpansive mapping is based on that of
 a strongly nonexpansive mapping in the sense of~\cite{MR1386686};
 see also~\cites{MR0470761}. 
\end{remark}

\begin{remark}
 If a strongly nonexpansive mapping in the sense 
 of~\cites{MR0470761,MR2581778,MR2799767,MR2671943,MR2377867}
 has a \emph{fixed point}, then it is strongly quasinonexpansive 
 above. 
\end{remark}

\begin{remark}
 Let $T\colon C\to H$ be a mapping with a fixed point. 
 Since $\beta_n - \alpha_n \to 0 \Leftrightarrow 
 \beta_n^2 - \alpha_n^2 \to 0$ for all bounded sequences
 $\{\alpha_n\}$ and $\{\beta_n\}$ in $[0,\infty)$, 
 $T$ is strongly quasinonexpansive if and only if 
 $T$ is of \emph{type~(sr)} in the sense of 
 \cites{MR3258665,MR2529497,MR2884574}; see also 
 \cites{MR2358984,MR3745220}.
\end{remark}

\begin{remark}
 It is known that 
 the subgradient projection in the sense of~\cite{MR3213161}
 is a strongly quasinonexpansive mapping 
 which is not nonexpansive; see also \cites{MR1895827,MR3745220}.
\end{remark}

Let $D$ be a nonempty closed convex subset of $H$. 
It is known that for each $x \in H$
there exists a unique point $x_0 \in D$ such that 
\[
 \norm{x - x_0} = \min\{\norm{x-y}: y\in D\}. 
\]
Such a point $x_0$ is denoted by $P_D (x)$ and $P_D$ is called the
\emph{metric projection} of $H$ onto $D$. 
It is known that $P_D$ is firmly nonexpansive; see, for example,
\cite{MR2548424}.

\begin{remark}\label{r:metric_projection}
 Let $D$ be a nonempty closed convex subset of $H$. 
 Since a firmly nonexpansive mapping is nonexpansive
 by~\eqref{e:firmly}, it follows that the metric projection $P_D$ of
 $H$ onto $D$ is nonexpansive. Thus, $\Fhat(P_D) = \F(P_D) = D$. 
 Moreover, \eqref{e:firmly} also shows that a firmly nonexpansive
 mapping with a fixed point is strongly quasinonexpansive. 
 Therefore, $P_D$ is strongly quasinonexpansive. 
\end{remark}

Let $\{T_n\}$ be a sequence of mappings of $C$ into $H$ 
such that $\bigcap_n \F(T_n)$ is nonempty. Then 
\begin{itemize}
 \item $\{T_n\}$ is said to be 
       a \emph{strongly quasinonexpansive sequence}
       if each $T_n$ is quasinonexpansive and 
       $T_n x_n - x_n \to 0$ whenever $\{x_n\}$ is a bounded sequence
       in $C$ and $\norm{x_n - p} - \norm{T_n x_n - p} \to 0$ 
       for some point $p\in \bigcap_n \F(T_n)$; 
 \item a point $z \in C$ is said to be an \emph{asymptotic fixed
       point} of $\{T_n\}$ if there exist a sequence $\{ x_n \}$ in
       $C$ and a subsequence $\{x_{n_i}\}$ of $\{x_n\}$ such that
       $T_n x_n - x_n \to 0$ and $x_{n_i} \rightharpoonup z$;
       see~\cites{MR3698250,pNACA2017,MR3013135}. 
\end{itemize}
The set of asymptotic fixed points of $\{T_n\}$ is denoted by
$\hat{\F}(\{T_n\})$. It is clear that 
$\bigcap_n \F(T_n) \subset \hat{\F}(\{T_n\})$.

\begin{remark}
 We know that a strongly quasinonexpansive sequence above is 
 a sequence of \emph{strongly quasinonexpansive type}
 in the sense of~\cites{MR3698250,MR3213161,pNACA2017}
 and a \emph{strongly relatively nonexpansive sequence} in the sense 
 of \cites{MR3258665,MR2960628,MR2529497,MR3013135}; 
 see~\cite{MR3213161}*{Remark~2.5}. 
 We also know that if a \emph{strongly nonexpansive sequence} in the
 sense of~\cites{MR2377867,MR2581778,MR2799767} has a common fixed
 point, then it is a strongly quasinonexpansive sequence above;
 see also \cite{MR3714903}. 
\end{remark}

\begin{remark}
 We know that the condition $\Fhat(\{T_n\}) = \bigcap_n \F(T_n)$
 is equivalent to 
 the \emph{condition~(Z)} in the sense of~\cites{%
 MR3258665,MR3185784,MR3213161,MR2960628,
 MR2799767,MR2671943,MR2529497,
 MR3203624,pNACA2011,pNLMUA2011,MR2762191,MR2762173,MR3013135}, 
 the \emph{condition~(Z1)} in the sense of~\cite{pNAO-Asia2008}, and 
 the \emph{condition~(A)} in the sense of~\cites{MR2581778,MR2884574};
 see \cite{MR3013135}*{Proposition 6}. 
\end{remark}

\begin{remark}
 We can find some examples of strongly quasinonexpansive sequences with the
 condition~(Z) in~\cite{MR3698250}, \cite{MR3213161}*{Example 4.5}, 
 and~\cites{MR2799767,MR2529497,MR2960628}; 
 see also~\cites{MR2762191,MR2762173,MR3203624,pNACA2011,MR2671943,
 MR2780284,pNAO-Asia2008,MR2581778,MR2377867,MR3258665,MR3013135}. 
\end{remark}

The following lemma is a direct consequence
of~\cite{MR2529497}*{Theorem 3.8} and
~\cite{MR3013135}*{Proposition~4}; 
see also~\cite{MR2671943}*{Lemma 2.2}. 

\begin{lemma}\label{l:SQNt-Z}
 Let $H$ be a Hilbert space, $C$ a nonempty subset of $H$, 
 $\{U_n\}$ be a sequence of mappings of $C$ into $H$ such that 
 $\bigcap_n \F(U_n)$ is nonempty, 
 $\{\beta_n\}$ a sequence in $(0,1)$, 
 and $S_n\colon C\to H$ a mapping defined by 
 $S_n = \beta_n I + (1-\beta_n)U_n$ for $n\in\N$. 
 Then $\F(U_n) = \F(S_n)$ for all $n \in \N$, and moreover, 
 the following hold: 
 \begin{itemize}
  \item If each $U_n$ is quasinonexpansive and $\inf_n \beta_n > 0$,
	then 
	$\{S_n\}$ is a strongly quasinonexpansive sequence; 
  \item if $\sup_n \beta_n <1$ and 
	$\Fhat(\{U_n\}) = \bigcap_n \F(U_n)$, then 
	$\Fhat (\{S_n\}) = \bigcap_n \F(S_n)$. 
 \end{itemize}
\end{lemma}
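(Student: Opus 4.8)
The plan is to treat the three assertions in turn; the fixed-point identity is a one-line computation, and the two bullets both reduce to a single quadratic identity together with the elementary estimate $\norm{\lambda a+(1-\lambda)b}\le\lambda\norm{a}+(1-\lambda)\norm{b}$. For the identity, given $z\in C$ the equation $S_n z=z$ is equivalent to $(1-\beta_n)(U_n z-z)=0$, hence to $U_n z=z$ since $1-\beta_n\ne0$; thus $\F(S_n)=\F(U_n)$ for every $n$, and in particular $\bigcap_n\F(S_n)=\bigcap_n\F(U_n)$ is nonempty.

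Next, for the first bullet, I would first check that each $S_n$ is quasinonexpansive: if $z\in\F(S_n)=\F(U_n)$ then $\norm{S_n x-z}\le\beta_n\norm{x-z}+(1-\beta_n)\norm{U_n x-z}\le\norm{x-z}$ by convexity of the norm and quasinonexpansiveness of $U_n$. For the collective property, let $\{x_n\}$ be bounded and $p\in\bigcap_n\F(U_n)$ with $\norm{x_n-p}-\norm{S_n x_n-p}\to0$. Applying $\norm{\lambda a+(1-\lambda)b}^2=\lambda\norm{a}^2+(1-\lambda)\norm{b}^2-\lambda(1-\lambda)\norm{a-b}^2$ with $\lambda=\beta_n$, $a=x_n-p$, $b=U_n x_n-p$, and then discarding the term $(1-\beta_n)\norm{U_n x_n-p}^2$ via $\norm{U_n x_n-p}\le\norm{x_n-p}$, one obtains
\[
\beta_n(1-\beta_n)\norm{x_n-U_n x_n}^2\le\norm{x_n-p}^2-\norm{S_n x_n-p}^2=\bigl(\norm{x_n-p}-\norm{S_n x_n-p}\bigr)\bigl(\norm{x_n-p}+\norm{S_n x_n-p}\bigr).
\]
The right-hand side tends to $0$ since the first factor does and the second is bounded (by $\norm{S_n x_n-p}\le\norm{x_n-p}$ and boundedness of $\{x_n\}$). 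As $S_n x_n-x_n=(1-\beta_n)(U_n x_n-x_n)$, it follows that $\norm{S_n x_n-x_n}^2=\frac{1-\beta_n}{\beta_n}\bigl(\beta_n(1-\beta_n)\norm{U_n x_n-x_n}^2\bigr)$, and since $\frac{1-\beta_n}{\beta_n}\le(\inf_n\beta_n)^{-1}$ the right-hand side goes to $0$; hence $S_n x_n-x_n\to0$, so $\{S_n\}$ is a strongly quasinonexpansive sequence.

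For the second bullet, I would use that $\bigcap_n\F(S_n)\subset\Fhat(\{S_n\})$ holds automatically, so only the reverse inclusion is needed. If $z\in\Fhat(\{S_n\})$, pick $\{x_n\}$ in $C$ and a subsequence $\{x_{n_i}\}$ with $S_n x_n-x_n\to0$ and $x_{n_i}\rightharpoonup z$; from $S_n x_n-x_n=(1-\beta_n)(U_n x_n-x_n)$ and $\inf_n(1-\beta_n)=1-\sup_n\beta_n>0$ we get $\norm{U_n x_n-x_n}\le(1-\sup_n\beta_n)^{-1}\norm{S_n x_n-x_n}\to0$, whence $z\in\Fhat(\{U_n\})=\bigcap_n\F(U_n)=\bigcap_n\F(S_n)$.

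All of this is routine once the quadratic identity is written down; the one step needing a moment's care is the passage from $\beta_n(1-\beta_n)\norm{x_n-U_n x_n}^2\to0$ to $S_n x_n-x_n\to0$, because $\beta_n$ is allowed to approach $1$. That is exactly where the hypothesis $\inf_n\beta_n>0$ (as opposed to merely $\beta_n\in(0,1)$) is used, through $\frac{1-\beta_n}{\beta_n}\le(\inf_n\beta_n)^{-1}$; symmetrically, $\sup_n\beta_n<1$ is what the second bullet relies on.
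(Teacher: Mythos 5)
Your argument is correct, but note that the paper itself does not prove this lemma: it is presented as a direct consequence of \cite{MR2529497}*{Theorem 3.8} and \cite{MR3013135}*{Proposition 4}. Your self-contained proof is therefore a genuinely different (more elementary and more transparent) route: the identity $\norm{\lambda a+(1-\lambda)b}^2=\lambda\norm{a}^2+(1-\lambda)\norm{b}^2-\lambda(1-\lambda)\norm{a-b}^2$ extracts $\beta_n(1-\beta_n)\norm{x_n-U_nx_n}^2$ from the defect $\norm{x_n-p}^2-\norm{S_nx_n-p}^2$, and the relation $S_nx-x=(1-\beta_n)(U_nx-x)$ is used in both directions; this correctly isolates where $\inf_n\beta_n>0$ and where $\sup_n\beta_n<1$ enter, which is exactly the mechanism behind the cited results specialized to the Hilbert-space setting.

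One caveat. In two essential places --- to show that each $S_n$ is quasinonexpansive, and to discard the term $(1-\beta_n)\norm{U_nx_n-p}^2$ via $\norm{U_nx_n-p}\le\norm{x_n-p}$ --- you invoke quasinonexpansiveness of $U_n$, which is \emph{not} among the hypotheses as the lemma is printed; the statement only assumes that $\bigcap_n\F(U_n)$ is nonempty. That assumption is in fact indispensable for the first bullet (take $U_n=2I$ on $H$: then $S_n=(2-\beta_n)I$ is not quasinonexpansive, so $\{S_n\}$ cannot be a strongly quasinonexpansive sequence), so the lemma as literally stated omits a needed hypothesis that is present in the cited source and is satisfied in every application in the paper (there each $U_n$ is quasinonexpansive by Lemma~\ref{l:T_i2U_n}(2)). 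Your proof is right under the intended hypotheses, but you should say explicitly that you are assuming each $U_n$ to be quasinonexpansive rather than deriving it. The second bullet, as you observe, needs no such assumption.
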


The following lemma is important in the convergence analysis 
of the parallel method in Sections 3 and 4. 

\begin{lemma}\label{l:T_i2U_n}
 Let $H$ be a Hilbert space, $C$ a nonempty subset of $H$, 
 $\{U_n\}$ be a sequence of mappings of $C$ into $H$,
 $N$ a positive integer, 
 and $T_i$ a mapping of $C$ into $H$ for $i \in \Lambda$, 
 where $\Lambda = \{i \in \N\colon 1\leq i \leq N\}$. 
 Suppose that $\bigcap_{i \in \Lambda} \F(T_i)$ is nonempty, 
 and that for any $x \in C$ and $n \in \N$ there exists
 $k \in \arg \max \{\norm{T_i x - x} \colon i \in \Lambda \}$
 such that $U_n x = T_k x$. 
 Then the following hold: 
 \begin{enumerate}
  \item $\F(U_n) = \bigcap_{i \in \Lambda} \F(T_i)$
	for all $n \in \N$, and hence $\bigcap_n \F(U_n)$ is nonempty; 
  \item if $T_i$ is quasinonexpansive for all $i \in \Lambda$, 
	then so is $U_n$ for all $n \in \N$;
  \item if $\Fhat(T_i) = \F(T_i)$ for all $i \in \Lambda$, 
	then $\Fhat(\{U_n\}) = \bigcap_n \F(U_n)$;
  \item if $T_i$ is a strongly quasinonexpansive mapping
	for all $i \in \Lambda$, then $\{U_n\}$ is a strongly
	quasinonexpansive sequence. 
 \end{enumerate}
\end{lemma}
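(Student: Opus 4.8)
The plan is to exploit throughout the defining property that $U_n x$ always equals $T_k x$ for some index $k$ realizing the maximal displacement $\max\{\norm{T_i x - x}\colon i \in \Lambda\}$, together with the finiteness of $\Lambda$. Write $F = \bigcap_{i\in\Lambda}\F(T_i)$.

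For (1), the inclusion $F \subseteq \F(U_n)$ is immediate: if $z \in F$ then $\norm{T_i z - z} = 0$ for every $i$, so every index lies in the argmax and $U_n z = T_k z = z$ for the chosen $k$. Conversely, if $U_n z = z$, then $T_k z = z$ for some $k$ attaining the maximum, so $\max_{i}\norm{T_i z - z} = \norm{T_k z - z} = 0$, which forces $T_i z = z$ for all $i$; hence $\F(U_n) \subseteq F$. Nonemptiness of $\bigcap_n \F(U_n) = F$ is then the hypothesis. For (2), quasinonexpansivity is inherited pointwise: given $x \in C$ and $z \in \F(U_n) = F$, writing $U_n x = T_k x$ we have $z \in \F(T_k)$ and $\norm{U_n x - z} = \norm{T_k x - z} \le \norm{x - z}$.

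The key observation for (3) and (4) is that, since $k$ realizes the maximum, $\norm{U_n x - x} = \max\{\norm{T_i x - x}\colon i \in \Lambda\}$; hence for a sequence $\{x_n\}$ in $C$, $\norm{U_n x_n - x_n} \to 0$ is equivalent to $\norm{T_i x_n - x_n} \to 0$ for every $i \in \Lambda$ simultaneously. For (3): since $\bigcap_n\F(U_n) \subseteq \Fhat(\{U_n\})$ always, it suffices to prove the reverse inclusion. If $z \in \Fhat(\{U_n\})$, take $\{x_n\}$ in $C$ with $U_n x_n - x_n \to 0$ and a subsequence $x_{n_j} \rightharpoonup z$; by the observation, $T_i x_n - x_n \to 0$ for each fixed $i$, so the sequence $\{x_{n_j}\}$ witnesses $z \in \Fhat(T_i) = \F(T_i)$ for every $i$, whence $z \in F = \bigcap_n \F(U_n)$.

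For (4), $U_n$ is quasinonexpansive by (2), so only the defining convergence must be checked. Let $\{x_n\}$ be bounded in $C$ with $\norm{x_n - p} - \norm{U_n x_n - p} \to 0$ for some $p \in \bigcap_n\F(U_n) = F$, and for each $n$ fix $k_n \in \arg\max\{\norm{T_i x_n - x_n}\colon i\in\Lambda\}$ with $U_n x_n = T_{k_n} x_n$. The one point needing care is that the maximizing index $k_n$ varies with $n$. I would handle this by partitioning $\N$ into the finitely many sets $A_i = \{n\colon k_n = i\}$, $i \in \Lambda$: on each infinite $A_i$ we have $\norm{x_n - p} - \norm{T_i x_n - p} \to 0$ along $A_i$, with $\{x_n\}$ bounded and $p \in \F(T_i)$, so strong quasinonexpansivity of $T_i$ gives $\norm{T_i x_n - x_n} \to 0$ along $A_i$; combining the finitely many estimates (the finite $A_i$ being irrelevant) yields $\norm{U_n x_n - x_n} = \norm{T_{k_n} x_n - x_n} \to 0$. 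Equivalently, one may argue by contradiction: a subsequence on which $\norm{U_n x_n - x_n}$ is bounded away from $0$ would, by finiteness of $\Lambda$, admit a further subsequence on which $k_n \equiv k$ is constant, contradicting the strong quasinonexpansivity of $T_k$. I expect this bookkeeping with the varying maximizing index to be the only mildly delicate step; the rest is a direct unwinding of the definitions.
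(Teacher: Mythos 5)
Your proof is correct and, for part (4) --- the only part the paper proves rather than cites --- your contradiction/constant-subsequence argument is essentially the paper's own (the paper phrases it as: every subsequence of $U_n x_n - x_n$ admits a further subsequence along which the maximizing index is constant and which therefore tends to $0$). Parts (1)--(3), which the paper simply outsources to \cite{JMM2018}*{Lemmas 2.2 and 2.3}, are handled correctly by your direct arguments, the key identity $\norm{U_n x - x} = \max\{\norm{T_i x - x}\colon i \in \Lambda\}$ being exactly the right tool for (3).
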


\begin{proof}
 Because (1), (2), and (3) are direct consequences
 of~\cite{JMM2018}*{Lemmas 2.2 and 2.3}, we must only prove (4).
 From (1) and (2), we know that 
 $\bigcap_n \F(U_n) = \bigcap_{i \in \Lambda} \F(T_i)$ is nonempty
 and each $U_n$ is quasinonexpansive. 
 Let $\{x_n\}$ be a bounded sequence in $C$ and suppose that 
 $\norm{x_n - p} - \norm{U_n x_n - p} \to 0$ for some 
 $p \in \bigcap_n \F(U_n)$. 
 We denote $U_n x_n - x_n$ by $z_n$. 
 It is enough to show that $z_n \to 0$. 
 Let $\{z_{n_m}\}$ be a subsequence of $\{ z_n \}$ and 
 $\{U_{n_m}x_{n_m}\}$ a subsequence of $\{ U_n x_n\}$ with the same index.
 By assumption, it turns out that 
 there exist $i \in \Lambda$ and a subsequence 
 $\{U_{n_{m_l}} x_{n_{m_l}}\}$ of $\{U_{n_m} x_{n_m}\}$ 
 such that $U_{n_{m_l}} x_{n_{m_l}} = T_i x_{n_{m_l}}$ for all 
 $l \in \N$. 
 Since $p \in \F(T_i)$, 
 $T_i$ is strongly quasinonexpansive, and
 \[
 \norm{x_{n_{m_l}} - p} - \norm{T_i x_{n_{m_l}} - p} 
 = \norm{x_{n_{m_l}} - p} - \norm{U_{n_{m_l}} x_{n_{m_l}} - p} 
 \to 0
 \]
 as $l \to \infty$, 
 it follows that $z_{n_{m_l}} = T_i x_{n_{m_l}} - x_{n_{m_l}} \to 0$. 
 This completes the proof. 
\end{proof}

The following theorem is a direct consequence 
of~\cite{MR2960628}*{Theorem 4.1}; see 
also~\cites{pNACA2017,pNACA2011,MR3213161,MR3698250}. 

\begin{theorem}\label{t:AKT2012JNAO}
 Let $H$ be a Hilbert space, $C$ a nonempty closed convex subset of $H$, 
 $u$ a point in $H$, $\{\alpha_n\}$ a sequence in $(0,1]$, 
 $\{S_n\}$ a sequence of mappings of $C$ into $H$, 
 and $\{x_n\}$ a sequence defined by $x_1 \in C$ and
 \[
 x_{n+1} = P_C \bigl(\alpha_n u + (1-\alpha_n) S_n x_n\bigr)
 \]
 for $n \in \N$. 
 Suppose that 
 $F = \bigcap_n \F(S_n) \ne \emptyset$,
 $\{S_n\}$ is a strongly quasinonexpansive sequence,
 $\hat{\F}(\{S_n\}) = F$, 
 $\sum_n \alpha_n =\infty$, and $\lim_n\alpha_n=0$.
 Then $\{x_n\}$ converges strongly to $P_F (u)$. 
\end{theorem}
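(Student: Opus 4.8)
The plan is to run the by-now-standard analysis of Halpern-type processes, in the ``two subsequences'' form of Maing\'e and of Saejung--Yotkaew, adapted to a strongly quasinonexpansive sequence. (Alternatively one simply checks that the hypotheses are exactly those of \cite{MR2960628}*{Theorem 4.1}.) Write $z=P_F(u)$ --- well defined since $F=\bigcap_n\F(S_n)$ is nonempty, closed, and convex, each $\F(S_n)$ being closed and convex --- and $y_n=\alpha_n u+(1-\alpha_n)S_nx_n$, so that $x_{n+1}=P_Cy_n$ and $P_Cz=z$ because $z\in F\subset C$. For $p\in F$, quasinonexpansiveness of $S_n$ and nonexpansiveness of $P_C$ give $\norm{x_{n+1}-p}\le\alpha_n\norm{u-p}+(1-\alpha_n)\norm{x_n-p}$, hence by induction $\norm{x_n-p}\le\max\{\norm{x_1-p},\norm{u-p}\}$; thus $\{x_n\}$, $\{S_nx_n\}$, and $\{y_n\}$ are bounded.

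Put $s_n=\norm{x_n-z}^2$. From $P_Cz=z$, nonexpansiveness of $P_C$, convexity of $\norm{\,\cdot\,}^2$, and quasinonexpansiveness of $S_n$,
\[
s_{n+1}\le\norm{y_n-z}^2\le\alpha_n\norm{u-z}^2+(1-\alpha_n)\norm{S_nx_n-z}^2,
\]
and, expanding $y_n-z=\alpha_n(u-z)+(1-\alpha_n)(S_nx_n-z)$ and using $(1-\alpha_n)^2\le1-\alpha_n$,
\[
s_{n+1}\le\norm{y_n-z}^2\le(1-\alpha_n)s_n+\alpha_n\bigl(2\ip{u-z}{y_n-z}\bigr).
\]
By the elementary numerical lemma --- if $s_{n+1}\le(1-\alpha_n)s_n+\alpha_nt_n$, $\sum_n\alpha_n=\infty$, and $\limsup_kt_{n_k}\le0$ for every subsequence with $\liminf_k(s_{n_k+1}-s_{n_k})\ge0$, then $s_n\to0$ --- it suffices to prove $\limsup_k\ip{u-z}{y_{n_k}-z}\le0$ for every such subsequence $\{n_k\}$.

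Fix such $\{n_k\}$. Rearranging the first displayed inequality,
\[
(1-\alpha_n)\bigl(s_n-\norm{S_nx_n-z}^2\bigr)\le\alpha_n\norm{u-z}^2-\alpha_ns_n-(s_{n+1}-s_n);
\]
since $\alpha_{n_k}\to0$, $\{s_{n_k}\}$ is bounded, $1-\alpha_{n_k}\to1$, $\liminf_k(s_{n_k+1}-s_{n_k})\ge0$, and $s_n-\norm{S_nx_n-z}^2\ge0$, this forces $\norm{x_{n_k}-z}^2-\norm{S_{n_k}x_{n_k}-z}^2\to0$, and hence, both $\{\norm{x_{n_k}-z}\}$ and $\{\norm{S_{n_k}x_{n_k}-z}\}$ being bounded, $\norm{x_{n_k}-z}-\norm{S_{n_k}x_{n_k}-z}\to0$. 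Set $v_n=x_n$ if $n\in\{n_k:k\in\N\}$ and $v_n=z$ otherwise; then $\{v_n\}$ is bounded and $\norm{v_n-z}-\norm{S_nv_n-z}\to0$ (the terms with $n\notin\{n_k\}$ vanish because $S_nz=z$), so $S_nv_n-v_n\to0$ as $\{S_n\}$ is a strongly quasinonexpansive sequence; in particular $S_{n_k}x_{n_k}-x_{n_k}\to0$, whence $y_{n_k}-x_{n_k}=\alpha_{n_k}(u-x_{n_k})+(1-\alpha_{n_k})(S_{n_k}x_{n_k}-x_{n_k})\to0$. Choose a further subsequence $\{n_{k_j}\}$ with $\ip{u-z}{y_{n_{k_j}}-z}\to\limsup_k\ip{u-z}{y_{n_k}-z}$ and, by boundedness, $x_{n_{k_j}}\rightharpoonup v$ (with $v\in C$ since $C$ is weakly closed). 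Since $v_{n_{k_j}}=x_{n_{k_j}}\rightharpoonup v$ and $S_nv_n-v_n\to0$, we obtain $v\in\hat{\F}(\{S_n\})=F$, and then $\ip{u-z}{v-z}\le0$ because $P_F$ is firmly nonexpansive and $P_Fv=v$. As $y_{n_{k_j}}-x_{n_{k_j}}\to0$, it follows that $\limsup_k\ip{u-z}{y_{n_k}-z}=\ip{u-z}{v-z}\le0$; by the numerical lemma, $s_n\to0$, i.e.\ $x_n\to P_F(u)$.

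The main obstacle is exactly that last step. The two decisive hypotheses --- that $\{S_n\}$ is a strongly quasinonexpansive sequence and that $\hat{\F}(\{S_n\})=F$ --- are formulated for sequences indexed by all of $\N$, whereas the available estimates only control the behaviour along the subsequence $\{n_k\}$. The remedy is the device of completing $\{x_{n_k}\}$ to a full sequence $\{v_n\}$ by inserting the common fixed point $z$ at the unused indices; this is harmless precisely because $S_nz=z$ for every $n$, so the inserted entries contribute $0$ both to $\norm{v_n-z}-\norm{S_nv_n-z}$ and to $S_nv_n-v_n$. The remaining ingredients --- boundedness, the two inequalities, and the numerical lemma --- are routine.
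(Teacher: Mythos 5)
Your proof is correct, but it is a genuinely different route from the paper's: the paper gives no argument at all for this statement, simply deriving it as a direct consequence of \cite{MR2960628}*{Theorem 4.1} (the alternative you mention parenthetically), whereas you supply a complete self-contained proof via the Maing\'e/Saejung--Yotkaew subsequence form of the Halpern-type analysis. The key steps all check out: the boundedness estimate, the two inequalities for $s_n=\norm{x_n-z}^2$, the extraction of $\norm{x_{n_k}-z}-\norm{S_{n_k}x_{n_k}-z}\to 0$ from $\liminf_k(s_{n_k+1}-s_{n_k})\ge 0$, and the variational characterization $\ip{u-z}{v-z}\le 0$ derived from firm nonexpansiveness of $P_F$. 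The one place where care is genuinely needed --- and where you correctly identify and resolve the issue --- is that the hypotheses ``$\{S_n\}$ is a strongly quasinonexpansive sequence'' and ``$\Fhat(\{S_n\})=F$'' are statements about full sequences indexed by $\N$, while your estimates live only on the subsequence $\{n_k\}$; the padding $v_n=z$ for $n\notin\{n_k\}$ is legitimate because $S_nz=z$ kills the inserted terms, and the resulting $\{v_n\}$ together with its weakly convergent subsequence is exactly what the paper's definition of an asymptotic fixed point of a sequence of mappings requires. What your approach buys is independence from the external reference (at the cost of invoking the standard numerical lemma, which you state but do not prove); what the paper's approach buys is brevity, since the cited theorem already packages this entire argument.
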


The following theorem is a direct consequence 
of~\cite{MR2529497}*{Theorem 4.1}; see also~\cite{MR2377867}. 

\begin{theorem}\label{t:AKT2009JFPTA}
 Let $H$ be a Hilbert space, $C$ a nonempty closed convex subset of $H$, 
 $\{S_n\}$ a sequence of mappings of $C$ into itself, 
 and $\{x_n\}$ a sequence in $C$ defined by $x_1 \in C$ and
 $x_{n+1} = S_n x_n$ for $n \in \N$. 
 Suppose that $F = \bigcap_n \F(S_n) \ne \emptyset$,
 $\{S_n\}$ is a strongly quasinonexpansive sequence, and
 $\hat{\F}(\{S_n\}) = F$. 
 Then $\{x_n\}$ converges weakly to the strong limit of $\{P_F(x_n)\}$. 
\end{theorem}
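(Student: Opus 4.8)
The plan is to run the classical Fej\'er-monotonicity argument, exactly the one underlying \cite{MR2529497}*{Theorem 4.1}. First I would fix $p\in F$ and note that since each $S_n$ is quasinonexpansive with $p\in\F(S_n)$, we have $\norm{x_{n+1}-p}=\norm{S_nx_n-p}\le\norm{x_n-p}$ for all $n\in\N$. Hence $\{\norm{x_n-p}\}$ is nonincreasing, so it converges; in particular $\{x_n\}$ is bounded, and
\[
\norm{x_n-p}-\norm{S_nx_n-p}=\norm{x_n-p}-\norm{x_{n+1}-p}\to 0 .
\]
Since $\{S_n\}$ is a strongly quasinonexpansive sequence and $\{x_n\}$ is bounded, this forces $S_nx_n-x_n\to 0$, that is, $x_{n+1}-x_n\to 0$.

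Next I would record that $F=\bigcap_n\F(S_n)$ is nonempty, closed, and convex: $C$ is closed and convex and each $S_n$ is quasinonexpansive, so each $\F(S_n)$ is closed and convex by \cite{MR0298499}*{Theorem~1} (and nonempty, containing $F$); hence $P_F$ is well defined. Now take any subsequence with $x_{n_i}\rightharpoonup z$ (one exists, $\{x_n\}$ being bounded). Because $S_nx_n-x_n\to 0$ and $x_{n_i}\rightharpoonup z$, the point $z$ is by definition an asymptotic fixed point of $\{S_n\}$, so $z\in\Fhat(\{S_n\})=F$. Applying the identity $\norm{x_n-w}^2=\norm{x_n-w'}^2+2\ip{x_n-w'}{w'-w}+\norm{w'-w}^2$ together with the convergence of $\{\norm{x_n-w}\}$ for every $w\in F$ (an Opial-type computation), I would conclude that any two weak subsequential limits of $\{x_n\}$ coincide; thus $x_n\rightharpoonup z$ for a single $z\in F$.

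It remains to show $\{P_F(x_n)\}$ converges strongly and that its limit is $z$. Write $d_n=\norm{x_n-P_F(x_n)}=\operatorname{dist}(x_n,F)$. From $\norm{x_m-q}\le\norm{x_n-q}$ for $m\ge n$ and all $q\in F$ we get $d_m\le d_n$, so $\{d_n\}$ converges; and applying the obtuse-angle inequality for the projection onto the convex set $F$ with $q=P_F(x_n)$ gives, for $m\ge n$,
\[
\norm{P_F(x_m)-P_F(x_n)}^2\le\norm{x_m-P_F(x_n)}^2-d_m^2\le d_n^2-d_m^2\xrightarrow[n\to\infty]{}0,
\]
so $\{P_F(x_n)\}$ is Cauchy and converges strongly to some $z_0\in F$. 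Finally, from $\ip{x_n-P_F(x_n)}{z-P_F(x_n)}\le 0$ (valid since $z\in F$), letting $n\to\infty$ and using $x_n\rightharpoonup z$ together with $P_F(x_n)\to z_0$ (a weak-times-strong pairing) yields $\ip{z-z_0}{z-z_0}\le 0$, hence $z=z_0$. Therefore $\{x_n\}$ converges weakly to $z_0=\lim_nP_F(x_n)$.

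I expect the only genuinely delicate point to be the two limit-identification steps: upgrading ``every weak cluster point lies in $F$'' plus Fej\'er monotonicity to honest weak convergence, and matching the weak limit of $\{x_n\}$ with the strong limit of $\{P_F(x_n)\}$. Both are standard but require care about which inner-product limits are legitimate (weak $\times$ strong converges, weak $\times$ weak need not), and about using the monotonicity of $\{\norm{x_n-w}\}$ uniformly over $w\in F$.
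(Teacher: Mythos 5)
Your argument is correct, and it is worth noting that the paper itself offers no proof of Theorem~\ref{t:AKT2009JFPTA}: it simply declares the theorem a direct consequence of \cite{MR2529497}*{Theorem 4.1}, a result stated for strongly relatively nonexpansive sequences in Banach spaces. What you have produced is a self-contained Hilbert-space reconstruction of the Fej\'er-monotonicity argument underlying that citation, and every step checks out: quasinonexpansiveness of each $S_n$ gives $\norm{x_{n+1}-p}\le\norm{x_n-p}$ for $p\in F$, hence boundedness of $\{x_n\}$ and $\norm{x_n-p}-\norm{S_nx_n-p}\to 0$, which the strong quasinonexpansiveness of the sequence upgrades to $x_{n+1}-x_n=S_nx_n-x_n\to 0$; the hypothesis $\Fhat(\{S_n\})=F$ then places every weak cluster point in $F$ (such a cluster point does lie in $C$, as the definition of asymptotic fixed point requires, because $C$ is closed and convex, hence weakly closed); the Opial-type identity forces all weak cluster points to coincide; and the combination of the monotonicity of $d_n=\operatorname{dist}(x_n,F)$ with the obtuse-angle inequality for $P_F$ yields strong convergence of $\{P_F(x_n)\}$ and, via a legitimate weak-times-strong pairing, the identification of its limit with the weak limit of $\{x_n\}$. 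The only cosmetic blemish is in the Cauchy-sequence display, where the quantity $d_n^2-d_m^2$ tends to $0$ as $n,m\to\infty$ jointly (with $m\ge n$), not as $n\to\infty$ alone. Compared with the paper, your route is more elementary and entirely self-contained, at the cost of redoing work that \cite{MR2529497} carries out in greater generality; the paper's one-line citation buys brevity and a connection to the Banach-space framework, but conveys none of the mechanism you have laid out.
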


\section{Parallel methods for quasinonexpansive mappings}

In this section, using the parallel method and 
Theorem~\ref{t:AKT2012JNAO},
we prove a strong convergence theorem for the common fixed point
problem of quasinonexpansive nonself-mappings. 

\begin{theorem} \label{t:Strong-QN}
 Let $H$ be a Hilbert space, 
 $C$ a nonempty closed convex subset of $H$, $N$ a positive integer, 
 and $T_i \colon C\to H$ a quasinonexpansive mapping for $i \in
 \Lambda$, where $\Lambda = \{ i \in \N\colon 1 \leq i \leq N\}$. 
 Let $F$ be the set of common fixed points of
 $\{T_i \}_{i\in \Lambda}$, that is,
 $F = \bigcap_{i \in \Lambda} \F(T_i)$, 
 and $\{x_n\}$ a sequence defined by $x_1\in C$ and 
 \[
 \begin{cases}
  i_n \in \arg \max \{ \norm{T_i x_n - x_n} \colon i \in \Lambda \}; \\
  x_{n+1} = P_C \Bigl( \alpha_n u + (1-\alpha_n) 
  \bigl( \beta_n x_n + (1-\beta_n) T_{i_n} x_n \bigr) \Bigr)
 \end{cases}
 \]
 for $n \in \N$, where $u \in H$, $\{\alpha_n\}$ is a sequence in
 $(0,1]$, and $\{\beta_n\}$ is a sequence in $(0,1)$. 
 Suppose that $F$ is nonempty, 
 $\Fhat(T_i) = \F(T_i)$ for all $i \in \Lambda$, 
 $\sum_n \alpha_n = \infty$, $\lim_n \alpha_n =0$, 
 $\inf_n \beta_n >0$, and $\sup_n \beta_n <1$. 
 Then $\{x_n\}$ converges strongly to $P_{F} (u)$. 
\end{theorem}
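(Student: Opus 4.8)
The plan is to reduce Theorem~\ref{t:Strong-QN} to an application of Theorem~\ref{t:AKT2012JNAO} by manufacturing an appropriate sequence $\{S_n\}$ of self-mappings of $C$ out of the given data. First I would define $U_n\colon C\to H$ by choosing, for each $x\in C$, an index $k\in\arg\max\{\norm{T_ix-x}\colon i\in\Lambda\}$ and setting $U_nx=T_kx$; the iteration's choice of $i_n$ shows that in fact $U_nx_n=T_{i_n}x_n$ along the generated sequence, so the two descriptions agree where it matters. Lemma~\ref{l:T_i2U_n} then immediately gives that $\bigcap_n\F(U_n)=F$ is nonempty, that each $U_n$ is quasinonexpansive, and—using the hypothesis $\Fhat(T_i)=\F(T_i)$—that $\Fhat(\{U_n\})=\bigcap_n\F(U_n)=F$.

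Next I would set $S_n=\beta_nI+(1-\beta_n)U_n$ for $n\in\N$ and invoke Lemma~\ref{l:SQNt-Z}. Since $\inf_n\beta_n>0$, that lemma yields that $\{S_n\}$ is a strongly quasinonexpansive sequence; since $\sup_n\beta_n<1$ and $\Fhat(\{U_n\})=\bigcap_n\F(U_n)$, it also yields $\Fhat(\{S_n\})=\bigcap_n\F(S_n)$; and in all cases $\F(S_n)=\F(U_n)$, so $\bigcap_n\F(S_n)=F\ne\emptyset$ and $\Fhat(\{S_n\})=F$. One point to check here is that $S_n$ actually maps $C$ into $C$: this follows because $C$ is convex, $x\in C$, and $U_nx=T_kx$ need not lie in $C$—so in fact $S_n$ maps into $H$, not necessarily into $C$. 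This is the one place where some care is genuinely needed, and it is why the outer $P_C$ appears in the iteration; Theorem~\ref{t:AKT2012JNAO} is stated for $\{S_n\}$ mapping $C$ into $H$ with a projection applied afterwards, so this is harmless, but I would state it explicitly.

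With $\{S_n\}$ in hand, the iteration in the theorem reads
\[
x_{n+1}=P_C\bigl(\alpha_n u+(1-\alpha_n)S_nx_n\bigr),
\]
because $\beta_nx_n+(1-\beta_n)T_{i_n}x_n=\beta_nx_n+(1-\beta_n)U_nx_n=S_nx_n$ along the sequence. Together with the hypotheses $\sum_n\alpha_n=\infty$ and $\lim_n\alpha_n=0$, all the assumptions of Theorem~\ref{t:AKT2012JNAO} are met, so $\{x_n\}$ converges strongly to $P_F(u)$, which is exactly the conclusion. The main obstacle is not any single hard estimate—everything delicate has been pushed into the two lemmas and Theorem~\ref{t:AKT2012JNAO}—but rather the bookkeeping of matching the $\arg\max$-selected mapping $U_n$ with the specific index $i_n$ in the algorithm, and confirming that the composite $P_C(\alpha_n u+(1-\alpha_n)(\beta_n I+(1-\beta_n)U_n))$ on the sequence coincides with the hybrid $P_C(\alpha_n u+(1-\alpha_n)S_n)$ from the cited theorem.
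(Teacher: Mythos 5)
Your proposal is correct and follows essentially the same route as the paper's own proof: define a selection mapping $U_n$ agreeing with $T_{i_n}$ at $x_n$ (the paper makes this explicit via $j(x)=\min\arg\max\{\norm{T_ix-x}\colon i\in\Lambda\}$ away from $x_n$), apply Lemma~\ref{l:T_i2U_n}, relax to $S_n=\beta_n I+(1-\beta_n)U_n$ via Lemma~\ref{l:SQNt-Z}, and invoke Theorem~\ref{t:AKT2012JNAO}. The only blemish is the garbled sentence about $S_n$ mapping $C$ into $C$, which you immediately and correctly retract since Theorem~\ref{t:AKT2012JNAO} only requires $S_n\colon C\to H$.
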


\begin{proof}
 Let $M$ be a set-valued mapping of $C$ into $\Lambda$ defined by 
 \[
 M(x) = \arg \max \{ \norm{T_i x - x} \colon i \in \Lambda\} 
 \]
 for $x\in C$ and let $U_n\colon C \to H$ be a mapping defined by 
 \[
  U_n x =
   \begin{cases}
    T_{i_n} x & \text{if $x=x_n$;}\\
    T_{j(x)} x & \text{otherwise}
   \end{cases}
 \]
 for $x \in C$ and $n \in \N$, where $j(x) = \min M(x)$ for $x \in C$. 
 Then we can verify that for any $x \in C$ and $n \in \N$, there exists
 $k \in M(x)$ such that $U_n x = T_k x$. 
 Thus Lemma~\ref{l:T_i2U_n} implies that 
 $F= \bigcap_n \F(U_n) \ne \emptyset$, 
 each $U_n$ is quasinonexpansive, 
 and $\Fhat(\{U_n\}) = \bigcap_n \F(U_n)$. 
 Set $S_n = \beta_n I + (1-\beta_n)U_n$ for $n \in \N$. 
 Then Lemma~\ref{l:SQNt-Z} shows that 
 $\{S_n\}$ is a strongly quasinonexpansive sequence
 and $\Fhat(\{S_n\}) = \bigcap_n \F(S_n) = F$. 
 From the definition of $U_n$, it is clear that 
 $x_{n+1} = P_C \bigl(\alpha_n u + (1-\alpha_n) S_n x_n\bigr)$ for all
 $n \in \N$. 
 Using Theorem~\ref{t:AKT2012JNAO}, 
 we conclude that $\{x_n\}$ converges strongly to $P_{F} (u)$. 
\end{proof}

If we suppose that every $T_i$ is a self-mapping and $u\in C$ in
Theorem~\ref{t:Strong-QN}, then we deduce the following: 

\begin{corollary}
 Let $H$, $C$, $N$, $\Lambda$, $\{\alpha_n\}$, and $\{\beta_n\}$ 
 be the same as in Theorem~\ref{t:Strong-QN}. 
 Let $T_i \colon C\to C$ be a quasinonexpansive mapping for
 $i \in \Lambda$, $F$ the set of common fixed points of 
 $\{T_i \}_{i\in \Lambda}$, and 
 $\{x_n\}$ a sequence defined by $x_1\in C$ and 
 \[
 \begin{cases}
  i_n \in \arg \max \{ \norm{T_i x_n - x_n} \colon i \in \Lambda \}; \\
  x_{n+1} = \alpha_n u + (1-\alpha_n) 
  \bigl( \beta_n x_n + (1-\beta_n) T_{i_n} x_n \bigr)
 \end{cases}
\]
 for $n \in \N$, where $u \in C$. 
 Suppose that $F$ is nonempty
 and $\Fhat(T_i) = \F(T_i)$ for all $i \in \Lambda$. 
 Then $\{x_n\}$ converges strongly to $P_{F} (u)$. 
\end{corollary}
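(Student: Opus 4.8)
The plan is to show that under the additional hypotheses the metric projection $P_C$ in Theorem~\ref{t:Strong-QN} is superfluous, so that the iteration in the corollary is literally a special case of the one in Theorem~\ref{t:Strong-QN}. First I would argue by induction on $n$ that $x_n \in C$ for all $n \in \N$ and that the defining recursion of the corollary agrees with that of Theorem~\ref{t:Strong-QN}. The base case $x_1 \in C$ is assumed. For the inductive step, suppose $x_n \in C$. Since $i_n \in \Lambda$ and $T_{i_n}\colon C \to C$, we have $T_{i_n} x_n \in C$, and since $C$ is convex, the convex combination $\beta_n x_n + (1-\beta_n) T_{i_n} x_n$ lies in $C$. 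Using $u \in C$ and convexity of $C$ once more, $\alpha_n u + (1-\alpha_n)\bigl(\beta_n x_n + (1-\beta_n) T_{i_n} x_n\bigr) \in C$ as well. Because $P_C$ restricts to the identity on $C$, applying $P_C$ to this point changes nothing, so
\[
 x_{n+1}
 = \alpha_n u + (1-\alpha_n)\bigl(\beta_n x_n + (1-\beta_n) T_{i_n} x_n\bigr)
 = P_C\Bigl(\alpha_n u + (1-\alpha_n)\bigl(\beta_n x_n + (1-\beta_n) T_{i_n} x_n\bigr)\Bigr),
\]
which is exactly the recursion of Theorem~\ref{t:Strong-QN}, and in particular $x_{n+1} \in C$, completing the induction.

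Next I would simply invoke Theorem~\ref{t:Strong-QN}. All of its hypotheses are in force: $H$ is a Hilbert space, $C$ is nonempty, closed, and convex, $N \in \N$, each $T_i\colon C \to C \subset H$ is quasinonexpansive, $F = \bigcap_{i\in\Lambda}\F(T_i)$ is the set of common fixed points, the sequences $\{\alpha_n\}$ and $\{\beta_n\}$ are taken as in Theorem~\ref{t:Strong-QN} (so $\sum_n \alpha_n = \infty$, $\lim_n \alpha_n = 0$, $\inf_n \beta_n > 0$, $\sup_n \beta_n < 1$), $F$ is nonempty, $\Fhat(T_i) = \F(T_i)$ for all $i \in \Lambda$, and $u \in C \subset H$. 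Since $\{x_n\}$ satisfies the recursion of Theorem~\ref{t:Strong-QN} by the previous paragraph, that theorem yields $x_n \to P_F(u)$, which is the assertion of the corollary.

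There is essentially no analytic obstacle here; the only thing requiring care is the induction verifying that the iterates never leave $C$, so that dropping $P_C$ is legitimate. Once that bookkeeping is done, the result is an immediate corollary of Theorem~\ref{t:Strong-QN}, and no new convergence analysis is needed.
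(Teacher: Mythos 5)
Your proposal is correct and matches the paper's intent exactly: the paper derives the corollary from Theorem~\ref{t:Strong-QN} simply by noting that when each $T_i$ is a self-mapping and $u\in C$, the projection $P_C$ acts as the identity on the iterates, which is precisely the induction you carry out. No further commentary is needed.
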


\section{Parallel methods for strongly quasinonexpansive mappings}
In this section, we prove strong and weak convergence theorems for the
common fixed point problem of strongly quasinonexpansive
self-mappings. 
We then explain how the theorems can be applied to common fixed point
problems for strongly quasinonexpansive nonself-mappings 
and quasinonexpansive mappings. 

Using Lemma~\ref{l:T_i2U_n} and Theorem~\ref{t:AKT2012JNAO}, 
we obtain the following: 

\begin{theorem}\label{t:strong-SQN}
 Let $H$ be a Hilbert space, 
 $N$ a positive integer, 
 $S_i \colon H\to H$ a strongly quasinonexpansive mapping for
 $i \in \Lambda$, and $F$ the set of common fixed points of
 $\{S_i \}_{i\in \Lambda}$, 
 where $\Lambda = \{ i \in \N\colon 1 \leq i \leq N\}$. 
 Suppose that $F$ is nonempty
 and $\Fhat(S_i) = \F(S_i)$ for all $i \in \Lambda$. 
 Let $\{x_n\}$ be a sequence defined by $x_1\in H$ and 
 \[
 \begin{cases}
  i_n \in \arg\max \{ \norm{S_i x_n - x_n} \colon i \in \Lambda \};\\
  x_{n+1} = \alpha_n u + (1-\alpha_n) S_{i_n} x_n 
 \end{cases}
 \]
 for $n \in \N$, where $u\in H$
 and $\{\alpha_n\}$ is a sequence in
 $(0,1]$. 
 If $\sum_n \alpha_n = \infty$ and $\lim_n \alpha_n =0$, 
 then $\{x_n\}$ converges strongly to $P_{F} (u)$. 
\end{theorem}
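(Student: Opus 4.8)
The plan is to reduce Theorem~\ref{t:strong-SQN} to Theorem~\ref{t:AKT2012JNAO} by the same device used in the proof of Theorem~\ref{t:Strong-QN}, but now with $C = H$ and without the outer metric projection being needed as a separate ingredient (since $P_H = I$). First I would define, exactly as before, the set-valued selection $M(x) = \arg\max\{\norm{S_i x - x}\colon i\in\Lambda\}$ and the single-valued mapping $U_n\colon H\to H$ by $U_n x = S_{i_n}x$ if $x = x_n$ and $U_n x = S_{j(x)}x$ otherwise, where $j(x) = \min M(x)$. The point of this construction is that $\{U_n\}$ is an honest sequence of mappings for which $U_n x_n = S_{i_n}x_n$, so that the given iteration $x_{n+1} = \alpha_n u + (1-\alpha_n)S_{i_n}x_n$ coincides with $x_{n+1} = \alpha_n u + (1-\alpha_n)U_n x_n$, which is the iteration in Theorem~\ref{t:AKT2012JNAO} with $C = H$ (so that $P_C = I$ and the parenthesized expression is literally $\alpha_n u + (1-\alpha_n)U_n x_n$).

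Next I would check the hypotheses required to invoke Lemma~\ref{l:T_i2U_n}: for every $x\in H$ and $n\in\N$ there exists $k\in M(x)$ with $U_n x = S_k x$, which is immediate from the definition of $U_n$ together with the fact that $i_n\in M(x_n)$. Applying Lemma~\ref{l:T_i2U_n} with $T_i$ replaced by $S_i$, parts (1), (2), and (4) give that $F = \bigcap_n \F(U_n)$ is nonempty, each $U_n$ is quasinonexpansive, and $\{U_n\}$ is a strongly quasinonexpansive sequence; part (3), using the hypothesis $\Fhat(S_i) = \F(S_i)$ for all $i\in\Lambda$, gives $\Fhat(\{U_n\}) = \bigcap_n \F(U_n) = F$. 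Thus all the structural hypotheses of Theorem~\ref{t:AKT2012JNAO} on $\{S_n\}$ (there, our $\{U_n\}$) are met.

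Finally, combining this with the stepsize assumptions $\sum_n \alpha_n = \infty$ and $\lim_n \alpha_n = 0$, Theorem~\ref{t:AKT2012JNAO} applied with $C = H$ and with its ``$S_n$'' taken to be our $U_n$ yields that $\{x_n\}$ converges strongly to $P_F(u)$, which is the assertion. I do not expect a genuine obstacle here: the theorem is a specialization of the scheme in Theorem~\ref{t:Strong-QN} obtained by deleting the averaging step (taking $\beta_n \equiv 0$ in spirit, so that Lemma~\ref{l:SQNt-Z} is not needed) and setting $C = H$. The only point that warrants a sentence of care is the well-definedness of $U_n$ as a function of $x$ — one must note that $M(x)$ is nonempty for every $x$ because $\Lambda$ is finite, so $j(x) = \min M(x)$ makes sense — and the observation that the outer $P_C$ in Theorem~\ref{t:AKT2012JNAO} is vacuous when $C = H$, so the iteration matches on the nose.
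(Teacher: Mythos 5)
Your proposal is correct and follows essentially the same route as the paper's own proof: the identical selection mapping $M$, the identical auxiliary sequence $U_n$ (with $U_n x_n = S_{i_n} x_n$ and $U_n x = S_{j(x)} x$ otherwise), an appeal to Lemma~\ref{l:T_i2U_n} for the strongly quasinonexpansive sequence and asymptotic fixed point properties, and then Theorem~\ref{t:AKT2012JNAO} with $C = H$. Your added remarks on the nonemptiness of $M(x)$ and the vacuity of $P_C$ when $C=H$ are harmless elaborations of points the paper leaves implicit.
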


\begin{proof}
 Let $M$ be a set-valued mapping of $H$ into $\Lambda$ defined by 
 \begin{equation}\label{e:Mx:SQN}
  M(x) = \arg \max \{ \norm{S_i x - x} \colon i \in \Lambda\}
 \end{equation}
 for $x\in H$ and let $U_n\colon H \to H$ be a mapping defined by 
 \begin{equation}\label{e:U_n:SQN}
  U_n x =
   \begin{cases}
    S_{i_n} x & \text{if $x=x_n$;}\\
    S_{j(x)} x & \text{otherwise}
   \end{cases}
 \end{equation}
 for $x \in H$ and $n \in \N$, where $j(x) = \min M(x)$ for $x \in H$. 
 Then we can verify that for any $x \in H$ and $n \in \N$, there exists
 $k \in M(x)$ such that $U_n x = S_k x$. 
 Thus Lemma~\ref{l:T_i2U_n} implies that 
 $F= \bigcap_n \F(U_n) \ne \emptyset$, 
 $\{U_n\}$ is a strongly quasinonexpansive sequence, 
 and $\Fhat(\{U_n\}) = \bigcap_n \F(U_n)$. 
 From the definition of $U_n$, it is clear that 
 $x_{n+1} = \alpha_n u + (1-\alpha_n) U_n x_n$ for all $n \in \N$. 
 Using Theorem~\ref{t:AKT2012JNAO}, 
 we conclude that $\{x_n\}$ converges strongly to $P_{F} (u)$. 
\end{proof}

Using Lemma~\ref{l:T_i2U_n} and Theorem~\ref{t:AKT2009JFPTA}, 
we obtain the following weak convergence result: 

\begin{theorem}\label{t:weak-SQN}
 Let $H$, $N$, $S_i$, $\Lambda$, $F$ be the same as in 
 Theorem~\ref{t:strong-SQN} 
 and $\{x_n\}$ a sequence defined by $x_1\in H$ and 
 \[
 \begin{cases}
  i_n \in \arg \max \{ \norm{S_i x_n - x_n} \colon i \in \Lambda \}; \\
  x_{n+1} = S_{i_n} x_n 
 \end{cases}
 \]
 for $n \in \N$. 
 Then $\{x_n\}$ converges weakly to the strong limit of $\{P_F(x_n)\}$. 
\end{theorem}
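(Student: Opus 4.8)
The plan is to follow the proof of Theorem~\ref{t:strong-SQN} essentially verbatim, the only change being that the final appeal is to the weak-convergence result Theorem~\ref{t:AKT2009JFPTA} instead of Theorem~\ref{t:AKT2012JNAO}. First I would introduce the set-valued mapping $M\colon H\to\Lambda$ by \eqref{e:Mx:SQN}, i.e.\ $M(x) = \arg\max\{\norm{S_i x - x}\colon i\in\Lambda\}$, and the sequence of single-valued mappings $U_n\colon H\to H$ by \eqref{e:U_n:SQN}, namely $U_n x = S_{i_n}x$ if $x=x_n$ and $U_n x = S_{j(x)}x$ otherwise, where $j(x) = \min M(x)$. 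Since $i_n\in M(x_n)$ by the construction of $\{x_n\}$, one checks that for every $x\in H$ and every $n\in\N$ there is some $k\in M(x)$ with $U_n x = S_k x$; this is precisely the hypothesis required by Lemma~\ref{l:T_i2U_n}, and it is the one bookkeeping point I would spell out in detail.

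Next I would invoke Lemma~\ref{l:T_i2U_n}. Part~(1) gives $F = \bigcap_n\F(U_n)$, so in particular $\bigcap_n\F(U_n)\ne\emptyset$; part~(4), using that each $S_i$ is strongly quasinonexpansive, yields that $\{U_n\}$ is a strongly quasinonexpansive sequence; and part~(3), using the standing hypothesis $\Fhat(S_i) = \F(S_i)$ for all $i\in\Lambda$, yields $\Fhat(\{U_n\}) = \bigcap_n\F(U_n) = F$. Thus $\{U_n\}$ is a sequence of mappings of $H$ into itself satisfying all the hypotheses of Theorem~\ref{t:AKT2009JFPTA}.

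Finally, from the definition of $U_n$ it is immediate that the recursion $x_{n+1} = S_{i_n}x_n$ coincides with $x_{n+1} = U_n x_n$ for all $n\in\N$, which is exactly the iteration scheme of Theorem~\ref{t:AKT2009JFPTA}. Applying that theorem gives that $\{x_n\}$ converges weakly to the strong limit of $\{P_F(x_n)\}$, as desired.

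As for the main obstacle: there is no substantial difficulty, since all the analytic content has been packaged into Lemma~\ref{l:T_i2U_n} and Theorem~\ref{t:AKT2009JFPTA}. The only step needing care is the reduction of the parallel selection rule $i_n\in\arg\max\{\norm{S_i x_n - x_n}\colon i\in\Lambda\}$ to a genuine sequence $\{U_n\}$ of mappings to which Lemma~\ref{l:T_i2U_n} applies — and this is handled exactly as in the proof of Theorem~\ref{t:strong-SQN}, together with the verification that $U_n x$ is always of the form $S_k x$ for a maximizer $k\in M(x)$.
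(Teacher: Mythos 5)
Your proposal is correct and matches the paper's own proof essentially verbatim: both define $M$ and $U_n$ by \eqref{e:Mx:SQN} and \eqref{e:U_n:SQN}, invoke Lemma~\ref{l:T_i2U_n} to get $F=\bigcap_n\F(U_n)\ne\emptyset$, the strong quasinonexpansiveness of $\{U_n\}$, and $\Fhat(\{U_n\})=F$, then observe $x_{n+1}=U_nx_n$ and apply Theorem~\ref{t:AKT2009JFPTA}. No discrepancies to report.
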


\begin{proof}
 Let $M$ be a set-valued mapping of $H$ into $\Lambda$ defined by 
 \eqref{e:Mx:SQN} for $x\in H$
 and $U_n\colon H \to H$ a mapping defined by 
 \eqref{e:U_n:SQN} for $x \in H$ and $n \in \N$. 
 As in the proof of Theorem~\ref{t:strong-SQN}, 
 it follows from Lemma~\ref{l:T_i2U_n} that 
 $F = \bigcap_n \F(U_n) \ne \emptyset$, 
 $\{U_n\}$ is a strongly quasinonexpansive sequence, 
 and $\Fhat(\{U_n\}) = \bigcap_n \F(U_n)$. 
 From the definition of $U_n$, 
 it is clear that $x_{n+1} = U_n x_n$ for all $n \in \N$. 
 Applying Theorem~\ref{t:AKT2009JFPTA}, we complete the proof. 
\end{proof}

\begin{remark}
 Theorems~\ref{t:strong-SQN} and \ref{t:weak-SQN} are applicable to
 the construction of a common fixed point of a finite family of strongly
 quasinonexpansive \emph{nonself}-mappings. 
 Indeed, let $C$ be a nonempty closed convex subset of a Hilbert space
 $H$ and $S\colon C\to H$ a strongly quasinonexpansive mapping. 
 Then it is clear that $SP_C$ is a mapping of $H$ into $H$. Furthermore,
 it is known that 
 $\F(SP_C) = \F(S)$ and $SP_C$ is strongly quasinonexpansive; 
 see~\cite{MR2884574}*{Lemmas 3.2 and 3.3} and
 ~\cite{MR3908455}*{Theorem 5.5}. 
 Moreover, it is known that 
 $\Fhat(SP_C) = \F(SP_C)$ if $\Fhat(S) = \F(S)$; 
 see \cite{MR2884574}*{Theorem 3.4} 
 and Remark~\ref{r:metric_projection}.
\end{remark}

\begin{remark}
 Theorems~\ref{t:strong-SQN} and \ref{t:weak-SQN} are applicable to
 the construction of a common fixed point of a finite family of
 \emph{quasinonexpansive} mappings. 
 Indeed, let $T\colon H\to H$ be a quasinonexpansive mapping. 
 Set $S_\lambda = \lambda I + (1-\lambda)T$ for some 
 $\lambda \in (0,1)$. 
 Then it is known that $\F(S_\lambda) = \F(T)$ and 
 $S_\lambda$ is strongly quasinonexpansive; 
 see \cite{MR2358984}*{Corollary 3.4}. 
 Moreover, it is known that 
 $\Fhat(S_\lambda) = \F(S_\lambda)$ if $\Fhat(T) = \F(T)$; 
 see~\cite{MR2884574}*{Corollary 3.8}. 
\end{remark}

% \bib, bibdiv, biblist are defined by the amsrefs package.
\begin{bibdiv}
\begin{biblist}

\bib{MR3196179}{article}{
      author={Anh, Pham~Ky},
      author={Chung, Cao~Van},
       title={Parallel hybrid methods for a finite family of relatively
  nonexpansive mappings},
        date={2014},
        ISSN={0163-0563},
     journal={Numer. Funct. Anal. Optim.},
      volume={35},
       pages={649\ndash 664},
         url={http://dx.doi.org/10.1080/01630563.2013.830127},
}

\bib{MR2762191}{incollection}{
      author={Aoyama, Koji},
       title={An iterative method for a variational inequality problem over the
  common fixed point set of nonexpansive mappings},
        date={2010},
   booktitle={Nonlinear analysis and convex analysis},
   publisher={Yokohama Publ., Yokohama},
       pages={21\ndash 28},
}

\bib{MR2762173}{incollection}{
      author={Aoyama, Koji},
       title={An iterative method for fixed point problems for sequences of
  nonexpansive mappings},
        date={2010},
   booktitle={Fixed point theory and its applications},
   publisher={Yokohama Publ., Yokohama},
       pages={1\ndash 7},
}

\bib{MR3013135}{incollection}{
      author={Aoyama, Koji},
       title={Asymptotic fixed points of sequences of quasi-nonexpansive type
  mappings},
        date={2011},
   booktitle={Banach and function spaces {III} ({ISBFS} 2009)},
   publisher={Yokohama Publ., Yokohama},
       pages={343\ndash 350},
}

\bib{pNLMUA2011}{incollection}{
      author={Aoyama, Koji},
       title={Halpern's iteration for a sequence of quasinonexpansive type
  mappings},
        date={2011},
   booktitle={Nonlinear mathematics for uncertainty and its applications},
      editor={Li, Shoumei},
      editor={Wang, Xia},
      editor={Okazaki, Yoshiaki},
      editor={Kawabe, Jun},
      editor={Murofushi, Toshiaki},
      editor={Guan, Li},
   publisher={Springer Berlin Heidelberg},
     address={Berlin, Heidelberg},
       pages={387\ndash 394},
         url={http://dx.doi.org/10.1007/978-3-642-22833-9_47},
}

\bib{pNACA2011}{incollection}{
      author={Aoyama, Koji},
       title={Approximations to solutions of the variational inequality problem
  for inverse-strongly-monotone mappings},
        date={2013},
   booktitle={Nonlinear analysis and convex analysis -{I}-},
   publisher={Yokohama Publ., Yokohama},
       pages={1\ndash 9},
}

\bib{pNACA2015}{incollection}{
      author={Aoyama, Koji},
       title={Strongly quasinonexpansive mappings},
        date={2016},
   booktitle={Nonlinear analysis and convex analysis},
   publisher={Yokohama Publ., Yokohama},
       pages={19\ndash 27},
}

\bib{MR3698250}{article}{
      author={Aoyama, Koji},
       title={Viscosity approximation method for quasinonexpansive mappings
  with contraction-like mappings},
        date={2016},
        ISSN={1341-9951},
     journal={Nihonkai Math. J.},
      volume={27},
       pages={167\ndash 180},
         url={https://projecteuclid.org/euclid.nihmj/1505419750},
}

\bib{MR3714903}{article}{
      author={Aoyama, Koji},
       title={Uniformly nonexpansive sequences},
        date={2017},
        ISSN={2188-8159},
     journal={Linear Nonlinear Anal.},
      volume={3},
       pages={179\ndash 187},
}

\bib{JMM2018}{article}{
      author={Aoyama, Koji},
       title={Parallel hybrid methods for relatively nonexpansive mappings},
        date={2018},
        ISSN={1344-7777},
     journal={Josai Mathematical Monographs},
      volume={11},
       pages={121\ndash 130},
}

\bib{pNACA2017}{incollection}{
      author={Aoyama, Koji},
       title={Approximation of fixed points of nonexpansive mappings and
  quasinonexpansive mappings in a {H}ilbert space},
        date={2019},
   booktitle={Nonlinear analysis and convex analysis},
   publisher={Yokohama Publ., Yokohama},
       pages={1\ndash 10},
}

\bib{MR2799767}{article}{
      author={Aoyama, Koji},
      author={Kimura, Yasunori},
       title={Strong convergence theorems for strongly nonexpansive sequences},
        date={2011},
        ISSN={0096-3003},
     journal={Appl. Math. Comput.},
      volume={217},
       pages={7537\ndash 7545},
         url={http://dx.doi.org/10.1016/j.amc.2011.01.092},
}

\bib{MR3203624}{incollection}{
      author={Aoyama, Koji},
      author={Kimura, Yasunori},
       title={A note on the hybrid steepest descent methods},
        date={2013},
   booktitle={Fixed point theory and its applications},
   publisher={Casa C\u ar\c tii de \c Stiin\c t\u a, Cluj-Napoca},
       pages={73\ndash 80},
}

\bib{MR3185784}{article}{
      author={Aoyama, Koji},
      author={Kimura, Yasunori},
       title={Viscosity approximation methods with a sequence of contractions},
        date={2014},
        ISSN={0716-7776},
     journal={Cubo},
      volume={16},
       pages={9\ndash 20},
}

\bib{MR2960628}{article}{
      author={Aoyama, Koji},
      author={Kimura, Yasunori},
      author={Kohsaka, Fumiaki},
       title={Strong convergence theorems for strongly relatively nonexpansive
  sequences and applications},
        date={2012},
        ISSN={1906-9685},
     journal={J. Nonlinear Anal. Optim.},
      volume={3},
       pages={67\ndash 77},
}

\bib{MR2377867}{article}{
      author={Aoyama, Koji},
      author={Kimura, Yasunori},
      author={Takahashi, Wataru},
      author={Toyoda, Masashi},
       title={On a strongly nonexpansive sequence in {H}ilbert spaces},
        date={2007},
        ISSN={1345-4773},
     journal={J. Nonlinear Convex Anal.},
      volume={8},
       pages={471\ndash 489},
}

\bib{MR2581778}{incollection}{
      author={Aoyama, Koji},
      author={Kimura, Yasunori},
      author={Takahashi, Wataru},
      author={Toyoda, Masashi},
       title={Strongly nonexpansive sequences and their applications in
  {B}anach spaces},
        date={2008},
   booktitle={Fixed point theory and its applications},
   publisher={Yokohama Publ., Yokohama},
       pages={1\ndash 18},
}

\bib{MR3258665}{article}{
      author={Aoyama, Koji},
      author={Kohsaka, Fumiaki},
       title={Strongly relatively nonexpansive sequences generated by firmly
  nonexpansive-like mappings},
        date={2014},
        ISSN={1687-1812},
     journal={Fixed Point Theory Appl.},
       pages={2014:95, 13},
         url={http://dx.doi.org/10.1186/1687-1812-2014-95},
}

\bib{MR3213161}{article}{
      author={Aoyama, Koji},
      author={Kohsaka, Fumiaki},
       title={Viscosity approximation process for a sequence of
  quasinonexpansive mappings},
        date={2014},
        ISSN={1687-1812},
     journal={Fixed Point Theory Appl.},
       pages={2014:17, 11},
         url={http://dx.doi.org/10.1186/1687-1812-2014-17},
}

\bib{MR3745220}{article}{
      author={Aoyama, Koji},
      author={Kohsaka, Fumiaki},
       title={Cutter mappings and subgradient projections in {B}anach spaces},
        date={2017},
        ISSN={2188-8159},
     journal={Linear Nonlinear Anal.},
      volume={3},
       pages={457\ndash 473},
}

\bib{MR4136436}{article}{
      author={Aoyama, Koji},
      author={Kohsaka, Fumiaki},
       title={Strongly quasinonexpansive mappings, {III}},
        date={2020},
        ISSN={2188-8159},
     journal={Linear Nonlinear Anal.},
      volume={6},
       pages={1\ndash 12},
}

\bib{MR2671943}{article}{
      author={Aoyama, Koji},
      author={Kohsaka, Fumiaki},
      author={Takahashi, Wataru},
       title={Shrinking projection methods for firmly nonexpansive mappings},
        date={2009},
        ISSN={0362-546X},
     journal={Nonlinear Anal.},
      volume={71},
       pages={e1626\ndash e1632},
         url={http://dx.doi.org/10.1016/j.na.2009.02.001},
}

\bib{MR2884574}{incollection}{
      author={Aoyama, Koji},
      author={Kohsaka, Fumiaki},
      author={Takahashi, Wataru},
       title={Strong convergence theorems by shrinking and hybrid projection
  methods for relatively nonexpansive mappings in {B}anach spaces},
        date={2009},
   booktitle={Nonlinear analysis and convex analysis},
   publisher={Yokohama Publ., Yokohama},
       pages={7\ndash 26},
}

\bib{pNAO-Asia2008}{incollection}{
      author={Aoyama, Koji},
      author={Kohsaka, Fumiaki},
      author={Takahashi, Wataru},
       title={Strong convergence theorems for a family of mappings of type
  ({P}) and applications},
        date={2009},
   booktitle={Nonlinear analysis and optimization},
   publisher={Yokohama Publ., Yokohama},
       pages={1\ndash 17},
}

\bib{MR2529497}{article}{
      author={Aoyama, Koji},
      author={Kohsaka, Fumiaki},
      author={Takahashi, Wataru},
       title={Strongly relatively nonexpansive sequences in {B}anach spaces and
  applications},
        date={2009},
        ISSN={1661-7738},
     journal={J. Fixed Point Theory Appl.},
      volume={5},
       pages={201\ndash 224},
         url={http://dx.doi.org/10.1007/s11784-009-0108-7},
}

\bib{MR2780284}{article}{
      author={Aoyama, Koji},
      author={Kohsaka, Fumiaki},
      author={Takahashi, Wataru},
       title={Proximal point methods for monotone operators in {B}anach
  spaces},
        date={2011},
        ISSN={1027-5487},
     journal={Taiwanese J. Math.},
      volume={15},
       pages={259\ndash 281},
}

\bib{MR2358984}{article}{
      author={Aoyama, Koji},
      author={Takahashi, Wataru},
       title={Strong convergence theorems for a family of relatively
  nonexpansive mappings in {B}anach spaces},
        date={2007},
        ISSN={1583-5022},
     journal={Fixed Point Theory},
      volume={8},
       pages={143\ndash 160},
}

\bib{MR3908455}{article}{
      author={Aoyama, Koji},
      author={Zembayashi, Kei},
       title={Strongly quasinonexpansive mappings, {II}},
        date={2018},
        ISSN={1345-4773},
     journal={J. Nonlinear Convex Anal.},
      volume={19},
       pages={1655\ndash 1663},
}

\bib{MR1895827}{article}{
      author={Bauschke, Heinz~H.},
      author={Combettes, Patrick~L.},
       title={A weak-to-strong convergence principle for {F}ej\'er-monotone
  methods in {H}ilbert spaces},
        date={2001},
        ISSN={0364-765X},
     journal={Math. Oper. Res.},
      volume={26},
       pages={248\ndash 264},
         url={http://dx.doi.org/10.1287/moor.26.2.248.10558},
}

\bib{MR0215141}{article}{
      author={Browder, Felix~E.},
       title={Convergence theorems for sequences of nonlinear operators in
  {B}anach spaces},
        date={1967},
        ISSN={0025-5874},
     journal={Math. Z.},
      volume={100},
       pages={201\ndash 225},
         url={https://doi.org/10.1007/BF01109805},
}

\bib{MR0470761}{article}{
      author={Bruck, Ronald~E.},
      author={Reich, Simeon},
       title={Nonexpansive projections and resolvents of accretive operators in
  {B}anach spaces},
        date={1977},
        ISSN={0362-1588},
     journal={Houston J. Math.},
      volume={3},
       pages={459\ndash 470},
}

\bib{MR0298499}{article}{
      author={Dotson, W.~G., Jr.},
       title={Fixed points of quasi-nonexpansive mappings},
        date={1972},
        ISSN={0263-6115},
     journal={J. Austral. Math. Soc.},
      volume={13},
       pages={167\ndash 170},
}

\bib{MR1074005}{book}{
      author={Goebel, Kazimierz},
      author={Kirk, W.~A.},
       title={Topics in metric fixed point theory},
      series={Cambridge Studies in Advanced Mathematics},
   publisher={Cambridge University Press, Cambridge},
        date={1990},
      volume={28},
        ISBN={0-521-38289-0},
         url={http://dx.doi.org/10.1017/CBO9780511526152},
}

\bib{MR2058234}{article}{
      author={Matsushita, Shin-ya},
      author={Takahashi, Wataru},
       title={Weak and strong convergence theorems for relatively nonexpansive
  mappings in {B}anach spaces},
        date={2004},
        ISSN={1687-1820},
     journal={Fixed Point Theory Appl.},
       pages={37\ndash 47},
         url={http://dx.doi.org/10.1155/S1687182004310089},
}

\bib{MR1386686}{incollection}{
      author={Reich, Simeon},
       title={A weak convergence theorem for the alternating method with
  {B}regman distances},
        date={1996},
   booktitle={Theory and applications of nonlinear operators of accretive and
  monotone type},
      series={Lecture Notes in Pure and Appl. Math.},
      volume={178},
   publisher={Dekker, New York},
       pages={313\ndash 318},
}

\bib{MR2548424}{book}{
      author={Takahashi, Wataru},
       title={Introduction to nonlinear and convex analysis},
   publisher={Yokohama Publishers, Yokohama},
        date={2009},
        ISBN={978-4-946552-35-9},
}

\end{biblist}
\end{bibdiv}

\end{document}